\newtheorem{theorem}{Theorem}
\newtheorem{lemma}{Lemma}
\newtheorem{proposition}{Proposition}
\newenvironment{proof}[1][Proof]{\textbf{#1.} }{\ \rule{0.5em}{0.5em}}
\newcommand{\cN}{\mathcal{N}}
\newcommand{\cT}{\mathcal{T}}
\begin{document}
\title{
SwiftCache: Model-Based Learning for Dynamic Content Caching in CDNs}

\author{Bahman Abolhassani, Atilla Eryilmaz, Tom Hou\\

\thanks{
B.  Abolhassani is with the Bradly Department of Electrical and Computer Engineering, Virginia Tech, Blacksburg, VA 99202 (e-mail:abolhassani@vt.edu).

A. Eryilmaz is with the  Department of  Electrical and  Computer  Engineering, The Ohio State University,  Columbus,  OH  43210  USA  (e-mail:eryilmaz.2@osu.edu).

T. Hou is with the Bradly Department of Electrical and Computer Engineering, Virginia Tech, Blacksburg, VA 99202 (e-mail:hou@vt.edu).
}
}

\maketitle


\begin{abstract}
We introduce SwiftCache, a "fresh" learning-based caching framework designed for content distribution networks (CDNs) featuring distributed front-end local caches and a dynamic back-end database. Users prefer the most recent version of the dynamically updated content, while the local caches lack knowledge of item popularity and refresh rates. We first explore scenarios with requests arriving at a local cache following a Poisson process, whereby we prove that the optimal policy features a threshold-based structure with updates occurring solely at request arrivals. Leveraging these findings, SwiftCache is proposed as a model-based learning framework for dynamic content caching. The simulation demonstrates near-optimal cost for Poisson process arrivals and strong performance with limited cache sizes. For more general environments, we present a model-free Reinforcement Learning (RL) based caching policy without prior statistical assumptions. The model-based policy performs well compared to the model-free policy when the variance of interarrival times remains moderate. However, as the variance increases, RL slightly outperforms model-based learning at the cost of longer training times, and higher computational resource consumption. Model-based learning's adaptability to environmental changes without retraining positions it as a practical choice for dynamic network environments. Distributed edge caches can utilize this approach in a decentralized manner to effectively meet the evolving behaviors of users.
\end{abstract}

\section{Introduction}
\label{sec:Intro}
Content distribution networks (CDNs) are crucial for efficiently delivering content to users while improving overall performance and reducing network congestion \cite{abolhassani2021fresh, abolhassani2022fresh, buyya2008content, bartolini2003optimal}. In the era of 5G networks, caching at the wireless edge is employed to accelerate content downloads and enhance wireless network performance \cite{liu2016caching, yao2019mobile, abolhassani2023optimal}.

However, the emergence of new services like video on demand, augmented reality, social networking, and online gaming, which produce constantly changing data, has rendered traditional caching methods less efficient. The importance of content freshness has grown significantly due to the rapid increase in mobile devices and real-time applications like video on demand and live broadcast \cite{zhang2019price, abolhassani2021single}. Simply serving content to users is no longer sufficient, as users prefer the latest data versions, valuing fresh content the most \cite{shapiro1998information, abolhassani2022fresh}. Directly delivering the freshest content from the back-end database to users can congest the network and degrade the quality of experience (QoE). These challenges are amplified with the growth of cellular traffic in the network. Consequently, there is a pressing need to develop new caching strategies that consider content refresh characteristics and aging costs to ensure efficient dynamic content distribution.

The dynamic nature of content in CDNs necessitates the implementation of effective caching strategies to maintain the freshness of cached content \cite{zhang2018towards, yan2021learning, abolhassani2023optimal}. This is crucial to enable users to access the most up-to-date information, especially in rapidly evolving environments where content popularity, demand intensity, and refresh rates are constantly fluctuating \cite{li2016popularity, kam2017information}.

Numerous works study the dynamic content delivery in caching systems such as \cite{najm2016age, candan2001enabling, shi2002workload, li2003freshness, wu2019dynamic, kam2017information, xu2020proactive, gao2020design, abolhassani2020achieving, mehrizi2019popularity, kumar2020optimized, zhong2018two, zhang2020reinforcement, masood2020learning, azimi2018online, zhang2019learning, abad2019dynamic} and effective strategies have been proposed. 
Despite the traditional caching paradigms, for content with varying generation dynamics, regular updates of cached content are crucial \cite{amadeo2020caching, bastopcu2020maximizing}, preventing caches from becoming outdated due to aging content. The widespread deployment of edge caches necessitates a decentralized approach, with each cache independently managing its content freshness to avoid becoming outdated. These edge caches face the challenge of balancing update costs to ensure cost-effective operations. However, they often lack access to vital network parameters, requiring them to learn and adapt strategies for efficient cache management. Rapid changes in system parameters further demand continuous observation and dynamic adjustment of strategies based on user behavior to maintain overall cost-effectiveness

Machine Learning techniques have been widely utilized in network environments to achieve optimal caching performance \cite{sethumurugan2021designing, chang2018learn, shuja2021applying}. Notably, Reinforcement Learning (RL) proves to be a viable option for edge caches, allowing them to learn user behavior through observing and interacting with them \cite{shuja2021applying, zhu2018deep, jiang2019multi}. The rate at which users generate requests can significantly impact the time required for the RL policy to gather sufficient data and accurately learn the environment. Additionally, when the environment undergoes changes, such as shifts in popularity or refresh rates, retraining becomes necessary as the older policy may no longer be suitable for the new environment. This can demand substantial computational resources at the edge caches which may exceed their available capacity. These challenges highlight the need for adaptable and efficient learning algorithms that can quickly adjust to changing conditions and ensure optimal caching performance in dynamic network environments.

In this study, we tackle the challenging problem of caching dynamic content in a general setting where local caches lack crucial information about item popularity, demand intensity, and refresh rates. To address these limitations and create an adaptable caching framework, we propose a "fresh" learning-based approach. We begin by investigating a scenario where requests follow a Poisson process, revealing that, under such conditions, the optimal policy adopts a threshold-based structure with updates occurring solely at request arrivals. Building on insights from this optimal policy, we introduce SwiftCache, a model-based learning algorithm that efficiently manages limited cache capacity, eliminating resource-intensive training. This approach dynamically monitors user behavior, adapting its strategy without the need for wasteful retraining. Beyond Poisson processes, we explore Reinforcement Learning (RL) as a caching policy for more general environments. While RL adapts to diverse arrival patterns, it demands longer training times and higher computational resources compared to model-based learning. Additionally, RL policies lack adaptability to environmental changes, requiring retraining as user behavior evolves.

To assess the performance of our proposed framework, we conduct extensive simulations and evaluate its cost-effectiveness compared to the optimal caching policy. Our results demonstrate that our proposed model-based learning emerges as a solid caching framework, achieving near-optimal cost and exhibiting resilience to environmental changes without the need for extensive computational resources. 

The rest of the paper is organized as follows. In Section~\ref{sec:Sys_Model}, we present a practical caching model for efficient dynamic content delivery with minimized long-term average costs. Section~\ref{sec: optimal-policy} identifies optimal caching policies in Poisson Process-based scenarios, revealing threshold-based structures. Building on these insights, Section~\ref{sec:model-based} introduces SwiftCache as a model-based learning framework for dynamic content caching, achieving near-optimal cost without knowledge of item popularity and refresh rates. In Section~\ref{sec:Model_free}, we propose a model-free reinforcement learning and compare it to our proposed algorithm highlighting their respective strengths and trade-offs. And finally we conclude the paper in Section~ \ref{sec:conclusion}.

\section{System Model}
\label{sec:Sys_Model}

The hierarchical caching system depicted in Fig.~\ref{fig:system,model} is designed to serve a user population with dynamic content. Requests arrive at the local cache with an intensity represented by the rate parameter $\beta\geq0$. The size of data items is denoted as $(b_n)_{n=1}^N$, and the popularity profile is characterized by $\mathbf{p}=(p_n)_{n=1}^N$. Updates for each data item occur at an average rate of $\lambda_n\geq 0$ updates per second, represented by the vector $\boldsymbol{\lambda}=(\lambda_n)_{n=1}^N$. Notably, the local cache which has a limited cash capacity, lacks knowledge of the popularity profile $\mathbf{p}$ and refresh rate $\boldsymbol{\lambda}$.

\begin{figure}[t]
\centering
\includegraphics[width=0.5\textwidth]{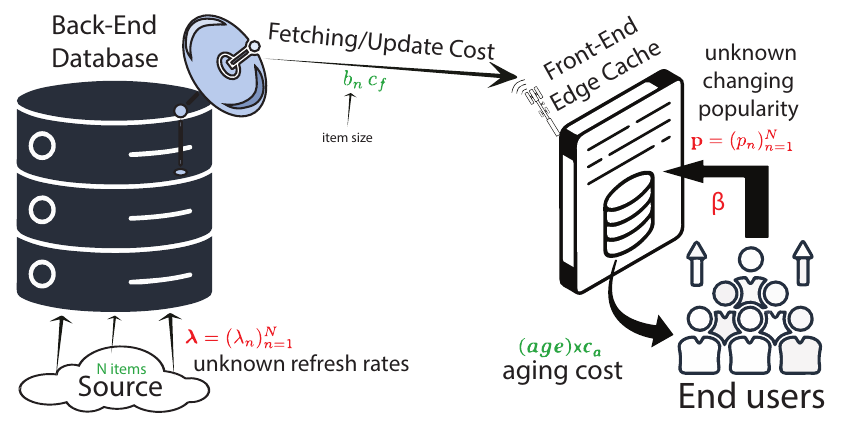}
\caption{Setting of \textit{Fresh Caching for Dynamic Content}}
\label{fig:system,model}
\end{figure}

\noindent\textbf{Age Dynamics: }To quantify freshness, we use the \emph{Age-of-Version} $\Delta_n(t)$, representing the number of updates since the item was last cached \cite{abolhassani2021fresh}.\\
\textbf{Fetching and Aging Costs: }Operational costs include fetching an item of size $b_n$ that incures cost $b_n \,c_f>0$. Performance costs account for serving an item with age $\Delta_n(t)$, incurring a freshness cost of $c_a \times \Delta_n(t)$.

\noindent \textbf{Problem Statement:} 
Our main goal is to develop efficient caching strategies that strike the right balance between the costs of frequently updating local content and providing aged content to users. Specifically, we seek a policy that minimizes the long-term average cost by making decisions on cache updates and request serving from the edge-cache. This objective can be formulated as follows:
\begin{equation}
\label{eq:costproblem}
\begin{split}
    &\min _{\boldsymbol{u}} \lim _{\mathrm{T} \rightarrow \infty} \frac{1}{T} \mathbb{E}_{s}^{u}\left[\int_{o}^{\mathrm{T}} \sum_{n=1}^N C_{n}^{\pi}\left(s_{n}(t),u_n(t)\right) dt\right]\\
    & s.t. \quad  u_n(t) \in \{0,1\}, \quad \forall n \in \cN , t\geq 0,
\end{split}
\end{equation}
where $u_n(t)\in \{0,1\}$ and $s_n(t)$ represent the action and state of the system at time $t$ for data item $n$, respectively. The cost of the system for data item $n$ at time $t$, given the state $s_n(t)$ and action $u_n(t)$ under a particular paradigm $\pi$, is denoted as $C_{n}^{\pi}\left(x_n(t),u_n(t)\right)$. The operator $\mathbb{E}_{s}^{u}$ captures the expected value over the system state $s$ for a given action $u$.
Our aim is to find the optimal policy $\boldsymbol{u}$ that minimizes this long-term average cost.

To address the challenges of the general problem expressed in \ref{eq:costproblem}, we first focus on a specific scenario where the arrival requests follow a Poisson Process. In addition, we assume that the local cache possesses information about item popularity, refresh rates, and demand intensity. These assumptions enable us to analyze the optimal policy's structure, which will be discussed in the next section.

\section{The Optimal Policy}
\label{sec: optimal-policy}

In this section, we focus on the previously introduced problem. Our goal is to analyze the structure of the optimal policy in a scenario where requests follow a Poisson process with a known rate $\beta$, and the local cache, constrained by limited storage capacity, is equipped with information about the popularity profile $\mathbf{p}$ and refresh rate $\boldsymbol{\lambda}$. Under these conditions, we present a clear characterization of the optimal policy in the following theorem.

\begin{theorem}
\label{thm:optimalpolicy}
Consider a system comprising a dataset $\mathcal{N}$ of $N$ dynamic items, where each item has a corresponding size denoted as $(\boldsymbol{b})_{n=1}^N$. In this system, a local cache with a constraint on its average cache occupancy $\tilde{B}$ serves a group of users that generate demand according to a Poisson Process with a known rate $\beta$. The local cache possesses knowledge of the popularity profile $\mathbf{p}=(p_n)_{n=1}^N$ and refresh rates $\boldsymbol{\lambda}=(\lambda_n)_{n=1}^N$. Under these conditions, the optimal caching strategy is an eviction-based policy. Specifically, the local cache assigns a constant timer to each item upon its storage, and the item is evicted from the cache when the timer expires. The timers for each item are determined as follows:
\begin{equation}
\label{eq:solutiont}
     \tau_n^* (\beta,\mathbf{p},\boldsymbol{\lambda})=\frac{1}{\beta p_{n}} \left[\sqrt{1+2 \, b_n \, \frac{\beta p_{n} c_{f}-\tilde{\alpha}^*}{c_{a} \lambda_{n}}}-1\right]^+,
\end{equation}
where $\tilde{\alpha}^* \geq 0$ is selected such that the following condition is satisfied.
\begin{equation}
\tilde{\alpha}^*\left(\sum_{n=1}^{N} \frac{\beta p_{n} \tau^*_{n}}{\beta p_{n} \tau^*_{n}+1} \, b_n-\tilde{B}\right)=0.
\end{equation}
\end{theorem}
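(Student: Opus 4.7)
The plan is to dualize the average cache occupancy constraint and reduce the problem to $N$ independent single-item subproblems, each of which becomes tractable via a renewal-reward analysis thanks to the Poisson structure of both the request stream and the update stream. Specifically, I would introduce a Lagrange multiplier $\tilde{\alpha}\geq 0$ for the constraint $\sum_n b_n\,\mathbb{P}\{n\text{ cached}\}\leq \tilde{B}$, obtaining a relaxed objective in which each item $n$ incurs an additional running cost $\tilde{\alpha}\,b_n$ per unit time while cached. Since the costs, arrivals, updates, and control actions are per-item in the relaxed formulation, the minimization decouples across $n$, and the original constrained optimum is then recovered by selecting $\tilde{\alpha}^*$ so that the stated complementary-slackness condition holds; this is standard once one verifies that the aggregate per-item occupancy is monotone in $\tilde{\alpha}$.

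The crux is the structural claim for each single-item subproblem: the optimal policy installs the item at a request arrival, keeps it in cache for a deterministic time $\tau_n$, evicts, and waits for the next arrival to re-install. I would cast the subproblem as a semi-Markov decision process whose state is the time since the last cache installation together with the cache indicator, with decisions to refresh (or not) at arrivals and to evict (or not) between arrivals. Two observations would drive the proof. First, by the memoryless property of the Poisson request process, a pre-emptive refresh strictly between two arrivals advances the fetch cost $c_f b_n$ and prolongs occupancy without improving the freshness seen by any actual user, so refreshes can be restricted to arrival epochs without loss. Second, conditional on the item being cached, the instantaneous relaxed cost rate $c_a\lambda_n\beta p_n t+\tilde{\alpha}b_n$ is nondecreasing in the elapsed time $t$, while the post-eviction continuation value is independent of $t$; a standard monotone-threshold argument on the Bellman equation, or an interchange argument comparing any stochastic eviction time against the deterministic timer with the same mean, then forces the optimal eviction to occur at a deterministic $\tau_n$. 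This step is the main obstacle, since the age $\Delta_n(t)$ itself is a stochastic Poisson process and the monotonicity has to be established in expectation rather than pathwise.

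Once the timer structure is fixed, the remainder is a direct calculation. A renewal cycle has expected length $\tau_n+1/(\beta p_n)$, expected fetching cost $c_f b_n$, expected aging cost $\int_0^{\tau_n} c_a\lambda_n t\cdot\beta p_n\,dt=c_a\lambda_n\beta p_n\tau_n^2/2$ by Campbell's formula for Poisson marks, and expected cache-occupancy integral $b_n\tau_n$. Dividing by the expected cycle length via the renewal-reward theorem and adding the Lagrangian occupancy term yields
\[
L_n(\tau_n;\tilde{\alpha})=\frac{\beta p_n c_f b_n + c_a\lambda_n(\beta p_n)^2\tau_n^2/2 + \tilde{\alpha}\,b_n\beta p_n\tau_n}{\beta p_n\tau_n+1}.
\]
Setting $dL_n/d\tau_n=0$ reduces to a quadratic in $\tau_n$ whose nonnegative root is exactly (\ref{eq:solutiont}); the outer $[\cdot]^+$ absorbs the regime $\tilde{\alpha}\geq \beta p_n c_f$, where the item is not worth caching at all and the timer collapses to zero. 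Finally, the aggregate occupancy $\sum_n b_n\,\beta p_n\tau_n^*/(\beta p_n\tau_n^*+1)$ is nonincreasing in $\tilde{\alpha}^*$, so a unique $\tilde{\alpha}^*\geq 0$ satisfies the complementary-slackness condition displayed in the theorem, closing the argument.
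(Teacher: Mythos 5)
Your proposal is correct in substance and reaches the paper's formula by the same renewal-reward calculation (your cycle cost and occupancy expressions reproduce the paper's $C(\boldsymbol{\tau})$ and $B(\boldsymbol{\tau})$, and your quadratic gives exactly \eqref{eq:solutiont}), but it handles the two halves of the argument somewhat differently from the paper. For the structural half you and the paper share the first key move --- an interchange argument showing that, by memorylessness, deferring any fetch to the next request arrival cannot increase cost (the paper's Proposition~\ref{prop:pulljump}) --- but where you gesture at ``a standard monotone-threshold argument on the Bellman equation,'' the paper actually carries this out: it discretizes at the arrival epochs, proves by induction that the finite-horizon value functions $v_m(s,r)$ are nondecreasing in the elapsed time $s$, deduces a threshold $\tau_m$ at each horizon, and then passes to the average-cost problem via monotone convergence and the vanishing-discount/ACOI machinery. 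You correctly identify this as the main obstacle, and your sketch would need essentially that same development to be complete. For the constrained-optimization half you genuinely diverge: the paper keeps the hard constraint, observes that the feasible set $\{\boldsymbol{\tau}: B(\boldsymbol{\tau})\le\tilde{B}\}$ is non-convex, and invokes a non-convex KKT global-optimality result (verifying Slater, non-degeneracy, a star-shapedness condition, and convexity of the strict sublevel sets of $C$), whereas you dualize up front, decouple across items, and recover the constrained optimum through complementary slackness plus monotonicity of the aggregate occupancy in $\tilde{\alpha}$ (Everett's theorem). Your route is more self-contained and has the additional virtue that the per-item structural argument is run on the penalized subproblem, so the timer structure and the constraint are handled in one pass rather than proving structure for the unconstrained problem and then optimizing only over timer policies; its price is that you must verify the occupancy map $\tilde{\alpha}\mapsto\sum_n b_n\beta p_n\tau_n^*(\tilde{\alpha})/(\beta p_n\tau_n^*(\tilde{\alpha})+1)$ is continuous and decreases to zero so that every feasible $\tilde{B}$ is attained, a point you assert but should spell out.
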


\begin{proof}
    The outline of the proof is to first show that the optimal policy has a threshold-based structure, where updates happen only at the time of request arrivals. The subsequent step is to characterize the optimal thresholds. For the full proof, please refer to the appendix.
\end{proof}

As evident from Equation \ref{eq:solutiont}, the timers can be explicitly determined as a function of the system parameters, including the popularity, refresh rate, and size of the items. The parameter $\tilde{\alpha}$ is selected such that the constraint $\tilde{B}$ on the average cache occupancy is satisfied. The optimal value $\tilde{\alpha}^*$ exhibits an inverse relationship with $\tilde{B}$. As the cache capacity increases, $\tilde{\alpha}^*$ tends to approach zero. In the specific scenario of an unlimited cache capacity, this corresponds to setting $\tilde{\alpha}^* = 0$.

The following proposition provides a characterization of the optimal cost when the cache capacity is unlimited. We will utilize this cost as a benchmark for comparing the performance of our proposed algorithms. 

\begin{proposition}
\label{prop:optimal_unlimited}
Under the conditions outlined in Theorem \ref{thm:optimalpolicy}, in the scenario where the cache storage capacity is unlimited, the optimal cost can be expressed as follows:
\begin{equation}
\label{eq:optimalcost_unlimited}
C^*(\beta,\mathbf{p},\boldsymbol{\lambda})=\sum_{n =1}^N c_{0} \lambda_{n}\left(\sqrt{1+2 \, b_n \frac{\beta p_{n}}{\lambda_{n}} \frac{c_{f}}{c_{0}}}-1\right).
\end{equation}
\end{proposition}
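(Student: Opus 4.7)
The plan is to specialize Theorem~\ref{thm:optimalpolicy} to the unlimited-capacity regime and then evaluate the resulting long-term average cost item by item via a Renewal Reward argument.

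First I would observe that when the cache is unconstrained, the complementary slackness condition is satisfied by $\tilde{\alpha}^*=0$, so \eqref{eq:solutiont} collapses to the explicit timer
\begin{equation*}
\tau_n^* \;=\; \frac{1}{\beta p_n}\left(\sqrt{1+\frac{2 b_n \beta p_n c_f}{c_a \lambda_n}}-1\right),
\end{equation*}
and the positive-part operator is redundant because the radicand is at least one. Note that since the cache is unlimited, the per-item problems fully decouple.

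Next, under this eviction policy the cache status of each item $n$ is a renewal process: a deterministic cached phase of length $\tau_n^*$ alternates with an uncached phase of length $\mathrm{Exp}(\beta p_n)$ (the waiting time until the next request, which triggers a fetch and opens the next cycle). The expected cycle length is $\tau_n^* + 1/(\beta p_n)$. Per cycle there is exactly one fetching event, contributing $b_n c_f$, together with the aging cost accumulated over the cached phase. Since the age at time $s$ into the cached phase has mean $\lambda_n s$ and requests arrive as a Poisson stream of rate $\beta p_n$, PASTA yields an expected aging cost per cycle equal to $\int_0^{\tau_n^*} c_a \lambda_n s \cdot \beta p_n \, ds = c_a \lambda_n \beta p_n (\tau_n^*)^2/2$. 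The Renewal Reward Theorem then gives
\begin{equation*}
C_n^* \;=\; \frac{b_n c_f + c_a \lambda_n \beta p_n (\tau_n^*)^2/2}{\tau_n^* + 1/(\beta p_n)}.
\end{equation*}

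The final step is purely algebraic. Introducing the change of variables $y \;=\; \sqrt{1 + 2 b_n \beta p_n c_f/(c_a \lambda_n)}$ gives $\beta p_n \tau_n^* = y-1$ and $b_n c_f = (y^2-1) c_a \lambda_n/(2\beta p_n)$, after which both numerator and denominator share a common factor $y/(\beta p_n)$ and the ratio simplifies cleanly to $C_n^* = c_a \lambda_n (y-1)$. Summing over $n$, and identifying $c_0 \equiv c_a$, recovers \eqref{eq:optimalcost_unlimited}. The main obstacle I expect is not the arithmetic but the conceptual step of reducing the continuous-time aging integral of \eqref{eq:costproblem} to a per-cycle reward of the form ``mean age $\times$ request-rate''; once the proof of Theorem~\ref{thm:optimalpolicy} is invoked to justify that aging cost accrues only at request instants (equivalently, via PASTA, at the Poisson rate $\beta p_n$ weighted by the mean age), the remainder is routine bookkeeping driven by the substitution above.
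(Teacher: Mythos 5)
Your proposal is correct and follows essentially the same route as the paper: set $\tilde{\alpha}^*=0$ in \eqref{eq:solutiont}, substitute the resulting timer into the per-item average cost $\beta p_n\bigl(\tfrac12 c_a\lambda_n p_n\beta\tau_n^2+b_nc_f\bigr)/(1+\beta p_n\tau_n)$, and simplify. The only difference is that you additionally derive that cost expression via a renewal--reward/PASTA argument (and spell out the algebra with the substitution $y$), whereas the paper simply asserts the formula from the cost definitions; your version fills in a step the paper leaves implicit, and your identification $c_0\equiv c_a$ correctly resolves the paper's notational inconsistency.
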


\begin{proof}
According to Theorem 1, the optimal policy $\boldsymbol{\tau}$ has a threshold-based structure where cache updates happen only at the times of request arrivals. Under such a policy and based on the freshness and fetching cost defined, the average cost in Equation \ref{eq:costproblem} can be rewritten as:

\begin{equation*}
    C(\boldsymbol{\tau}) = \beta \sum_{n=1}^{N} p_{n} \frac{\frac{1}{2} c_{a} \lambda_{n} p_{n} \beta \tau_{n}^{2} + b_n c_{f}}{1+\beta p_{n} \tau_{n}}, \label{eq:cost}
\end{equation*}

Replacing the optimal threshold given in Equation \ref{eq:solutiont} under $\tilde{\alpha}^* = 0$, representing unlimited cache capacity, will result in the average cost given in Equation \ref{eq:optimalcost_unlimited}.
\end{proof}

While Proposition \ref{prop:optimal_unlimited} provides the optimal cost under unlimited cache capacity, in real-world scenarios, cache occupancy is inherently limited due to dynamic content. Periodic updates are necessary to minimize freshness costs. The optimal policy's eviction-based structure achieves this by briefly retaining each item before eviction, resulting in a decrease in cache occupancy as content becomes more dynamic.

The insights from Theorem \ref{thm:optimalpolicy} rely on the assumption that the local cache has complete knowledge of demand intensity, popularity profile, and refresh rates. In practice, such comprehensive knowledge is often unavailable, and these parameters can change. In the next section, leveraging the insights from Theorem \ref{thm:optimalpolicy}, we propose SwiftCache, a novel learning-based algorithm grounded in the eviction-based model. This algorithm eliminates the need for computationally demanding training procedures, saving computational resources. Additionally, its adaptability to dynamic system parameters ensures robustness in the face of changes, making it a practical and efficient solution.

\section{Model-Based Learning} 
\label{sec:model-based}
In this section, we focus on crafting an efficient caching policy for dynamic content, where requests follow a Poisson process. Despite lacking information on popularity, refresh rates, and demand intensity, our goal is to design a policy that adapts to changing environmental conditions. Our proposed dynamic caching policy continuously learns and adjusts to evolving popularity and refresh rates, minimizing long-term average costs and achieving near-optimal performance in dynamic content caching.

Drawing insights from Theorem \ref{thm:optimalpolicy}, we introduce SwiftCache, a model-based caching policy for dynamic content. Inspired by Theorem \ref{thm:optimalpolicy}, our proposed caching policy features an eviction-based structure, dynamically learning and adjusting timers to maintain average cache occupancy below a specified threshold while minimizing the average long-term costs.

\begin{algorithm}
\caption*{\textbf{SwiftCache: Model-Based Caching for Dynamic Content}}
\begin{algorithmic}[1]
\Require $c_f, c_0, N, \boldsymbol{b}, B, \theta$
\State \textit{Initialization}: $\alpha = 0, \boldsymbol{eit} = [0]^N, \hat{\boldsymbol{\lambda}} = [0]^N, \boldsymbol{\tau} =[0]^N$, 
  $ \Delta = [0]^N, \boldsymbol{\Bar{t}} = [0]^N, \boldsymbol{\Bar{s}} = [0]^N, \Bar{B} = 0$
\While{there is a request for an item} 
  \State $n = $ index of the requested item
  \State $t = \text{time} -  \boldsymbol{\Bar{t}}_n$
  \State $s = \text{time} - \boldsymbol{\Bar{s}}_n$
  \State $\boldsymbol{\Bar{s}}_n = \text{time}$
  \If{$t \geq \tau_n$} 
      \State Fetch item $n$ and read its age, $\delta$
      \State Calculate current cache occupancy $\hat{B}$
      \State $\boldsymbol{\Bar{t}}_n$ = \text{time}
      \State $\hat{\lambda}_n = (1-\theta) \hat{\lambda}_n + \theta \frac{\delta - \Delta_n}{t} $
      \State $\Delta_n = \delta $
      \State $\tau_n = eit_n \left[ \sqrt{1+2b_n \frac{c_f-eit_n\alpha}{c_0 \hat{\lambda_n} eit_n}}-1\right]^+$
  \EndIf
  \State $eit_n = (1-\theta) \, eit_n + \theta \, s$
  \State $\Bar{B} = (1-\theta) \, \Bar{B} + \theta \, \hat{B}$
  \State $ \alpha = \max \,(0, \, (\Bar{B} -B) / (\max_n (\boldsymbol{eit}))_{n=1}^N )$.
\EndWhile
\Return $\boldsymbol{\tau}$\hfill\textit{end}
\end{algorithmic}
\end{algorithm}

The proposed algorithm employs three parameters—last request time of items ($\boldsymbol{\Bar{s}}$), last fetch time of items ($\boldsymbol{\Bar{t}}$), and the corresponding age of items during the last fetch ($\boldsymbol{\Delta}$)—to effectively manage the caching process. By monitoring and utilizing these parameters, the algorithm optimizes the caching strategy for efficient content delivery. To achieve this, SwiftCache estimates refresh rates ($\hat{\boldsymbol{\lambda}}$) and interarrival times ($\boldsymbol{eit}$), utilizing them to calculate timers ($\boldsymbol{\tau}$) for each item. Specifically, the algorithm calculates the holding time $\tau_n$ for data item $n$ based on the estimated refresh rate $\hat{\lambda}_n$ and estimated interarrival times $eit_n$ as given by:
\begin{equation}
    \tau_n = eit_n * \left[ \sqrt{1+2b_n \frac{c_f-eit_n*\alpha}{c_0 \hat{\lambda}_n eit_n}}-1\right]^+,
\end{equation}

This calculation is rooted in the optimal policy provided in Theorem \ref{thm:optimalpolicy}. Additionally, the algorithm dynamically adjusts timer values to minimize costs while adhering to the cache occupancy constraint. This adaptive approach ensures an optimized caching strategy, even in the presence of fluctuations in popularity, refresh rates, and demand intensity. By actively adapting to these changes, the algorithm effectively handles the dynamic nature of the system, ensuring efficient and robust performance.

To assess our algorithm's performance against the benchmark cost outlined in Proposition \ref{prop:optimal_unlimited}, we consider a system with $N=1000$ items, each with a size of $b=10$ and refresh rates $\lambda = 20$. User requests for items follow a Zipf popularity distribution with a parameter of $z=1$, where $p_n$ is proportional to $\frac{1}{n}$. Simulations are conducted under various average cache occupancy constraints, denoted as ${B}$. Fetching cost per item size is represented by $c_f=1$, and aging cost per single age is $c_a = 0.1$. The time averaging parameter $\theta$ is set to $\theta = 0.005$. These parameters will be used throughout the paper unless specified otherwise.

We gauge performance using the percentage cost increase of our proposed model-based learning policy compared to the optimal caching's cost. The percentage cost increase is calculated using:
\begin{equation}
\label{eq:percentageMB}
\text{Percentage Cost Increase$(\%)$}=100 \times \frac{C^{MB}-C^*(\beta,\mathbf{p},\boldsymbol{\lambda})}{C^{MB}},
\end{equation}
where $C^{MB}$ is the average long-term cost of the model-based learning policy, and $C^*(\beta,\mathbf{p},\boldsymbol{\lambda})$ denotes the optimal average cost given in Equation \ref{eq:optimalcost_unlimited}, considering unlimited cache capacity and perfect knowledge of the system parameters. Evaluating the cost increase as a percentage allows for effective assessment of the proposed caching policy's performance in terms of cost optimization.

\begin{figure}[t]
\centering
\includegraphics[width=0.5\textwidth]{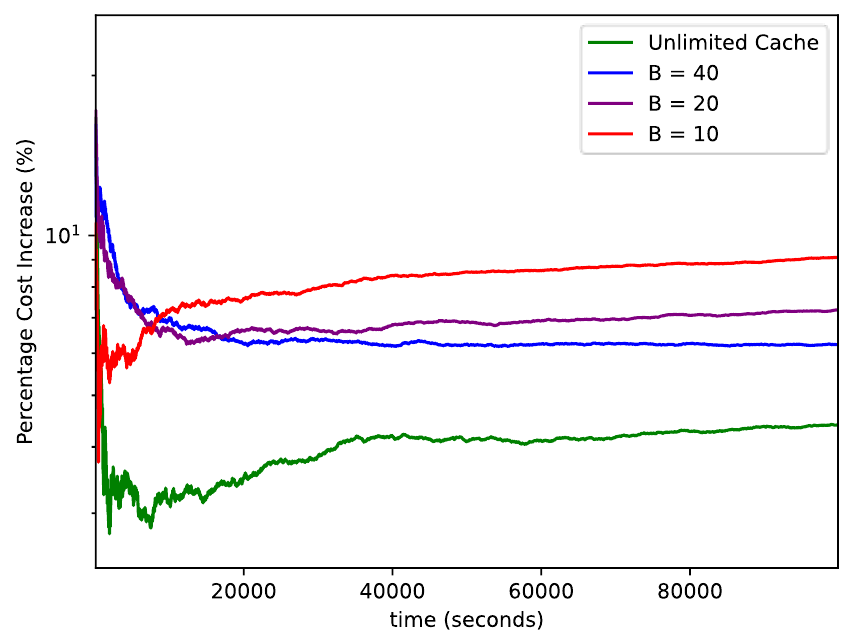}
\caption{Cost Comparison: SwiftCache vs. Optimal Policy}
\label{fig:model-based}
\vspace{-.1in}
\end{figure}

According to Fig. \ref{fig:model-based}, when the cache capacity is unlimited, our proposed caching policy demonstrates a mere 4$\%$ cost increase, approaching optimal performance. This remarkable performance is notable considering that the proposed policy does not possess knowledge of the popularity profile, refresh rates, and demand intensity, unlike the optimal policy.

Furthermore, as the cache size becomes limited, the performance gap between the proposed policy and the optimal strategy slightly increases. However, even in these constrained scenarios, the cost increase remains relatively low, below 10$\%$. As cache size becomes limited, the performance gap slightly widens; however, even in constrained scenarios, the cost increase stays below 10$\%$. This underscores the proposed policy's ability to not only learn popularity and refresh rates but also effectively utilize limited cache capacity, achieving lower average costs.

Our analysis, relying on Poisson process arrivals, led to the development of a model-based caching policy based on Theorem \ref{thm:optimalpolicy}. However, practical scenarios may deviate from the assumption of exponentially distributed interarrival times. To address this, we propose a model-free reinforcement learning approach. This approach allows the system to learn and utilize an optimal policy, regardless of the request arrival distribution. We seek to compare the performance of our model-based policy with the model-free approach in scenarios with diverse request arrival distributions, providing insights into their adaptability and effectiveness.

\section{Comparison to Model-Free Learning} 
\label{sec:Model_free}
In this section, we extend our analysis beyond the Poisson Process assumption, addressing dynamic content caching in a more general context. We consider scenarios where arrival requests don't follow a known distribution, and both update rate and popularity profiles are unknown. To tackle these challenges, we propose a learning approach for caching dynamic content based on Reinforcement Learning, aiming to enhance the efficiency of content delivery in such dynamic environments.

More specifically, we take the Q-Learning approach where the evolution of $Q$'s for each data item $n$ can be expressed as:
\begin{equation}
\begin{split}
& Q\left(S_t, u_t\right) \leftarrow Q\left(S_t, u_t\right) \\&+\alpha\left[R_{t+1}+\gamma \max _u Q\left(S_{t+1}, u\right)-Q\left(S_t, u_t\right)\right] .
\end{split}
\end{equation}

In our analysis, we utilize the notations $S_t$, $S_{t+1}$, and $u_t$ to represent the current state, next state, and the action taken at time $t$, respectively. Due to the unavailability of the probability distribution $P(S_{t+1} | S_y, u_t)$, the learning approach we employ is considered model-free. At each time step, there are two potential actions: cache update/fetch, denoted as $u_t = 1$, and no cache update, represented by $u_t = 0$. Using the estimated value for $Q$'s, the optimal action at each state $S_t$ is given by:
\begin{equation*}
    u_t^* = \underset{u \in \{0 , 1\}} {argmin} \; Q(S_t,u).
\end{equation*}

Given that the state $S_t$ evolves continuously over time $[0, \infty)$, we discretize time into steps of size $0.1$ seconds before applying the Q-Learning algorithm.

For a more insightful comparison between the proposed model-based learning (SwiftCache) and the model-free learning approach, we consider interarrival times of each data item $n$ following a Gamma distribution with parameters $(\omega, \frac{1}{\omega\beta p_n })$. This distribution ensures a constant average interarrival time of $\frac{1}{\beta p_n}$, regardless of $\omega$. The parameter $\omega > 0$ determines the variance, given by $\frac{1}{\omega (\beta p_n)^2}$. When $\omega = 1$, the distribution is equivalent to the exponential distribution. This choice facilitates a meaningful performance comparison between the proposed algorithms.

As a performance metric, we calculate the percentage cost increase of the model-free RL policy compared to the model-based caching policy (SwiftCache) using the formula:
\begin{equation}
\text{Percentage Cost Increase}(\%) = 100 \times \frac{C^{MF} - C^{MB}}{C^{MB}},
\end{equation}

This metric facilitates an effective comparison of their cost optimization capabilities. Positive values favor SwiftCache, while negative values favor the RL algorithm. We consider a system with $N=1000$ items, each of size $b=1$, requested uniformly with $p_n=0.001, \forall n$. Refresh rates are $\lambda = 20$. Interarrival times follow a Gamma distribution with parameters $(\omega, \frac{1}{\omega\beta p_n})$, where $\beta = 5$. Varying $\omega$ alters the interarrival time distribution, allowing us to analyze the impact on the performance of the caching algorithms and gain insights into their behavior under different scenarios.

\begin{figure}[t]
\centering
\includegraphics[width=0.5\textwidth]{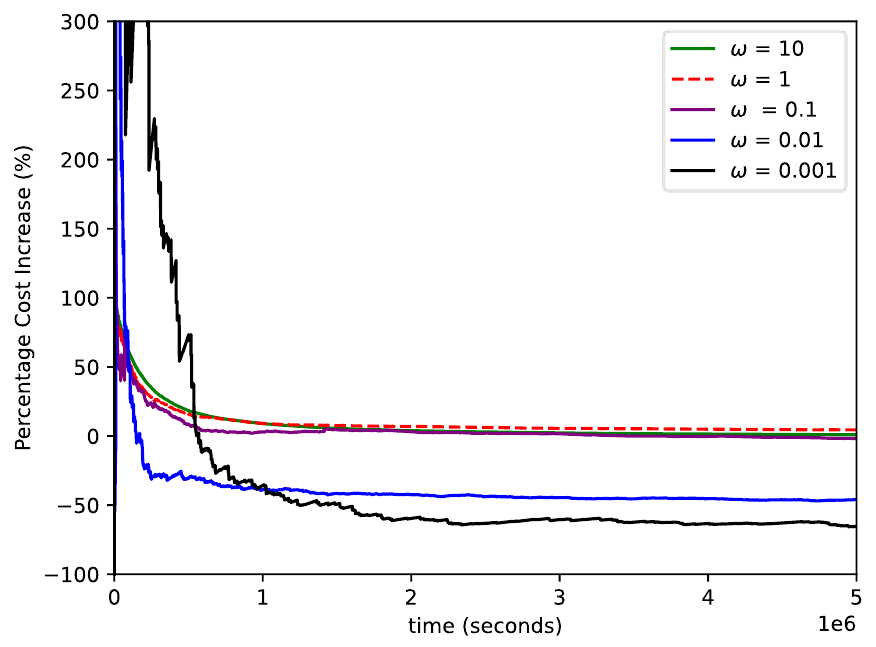}
\caption{Cost Comparison: SwiftCache vs. RL Policy}
\label{fig:comparison}
\end{figure}

According to Fig. \ref{fig:comparison}, for $\omega=1$, representing exponential interarrival times, SwiftCache slightly outperforms RL caching. This result was expected since SwiftCache is specifically designed for this type of environment.
Moreover, model-based learning, with faster convergence and less training time, proves competitive to RL caching in scenarios with modest variance. This emphasizes the cost-effectiveness and adaptability of model-based learning, requiring no retraining for environmental changes compared to the resource-intensive retraining needed for RL caching. On the other hand, with significantly reduced $\omega$ (e.g., $\omega = 0.001$), corresponding to large variances, RL caching outperforms model-based caching by almost 50$\%$. However, this improvement comes at the expense of extensive training time.

\begin{figure}[t]
\centering
\includegraphics[width=0.5\textwidth]{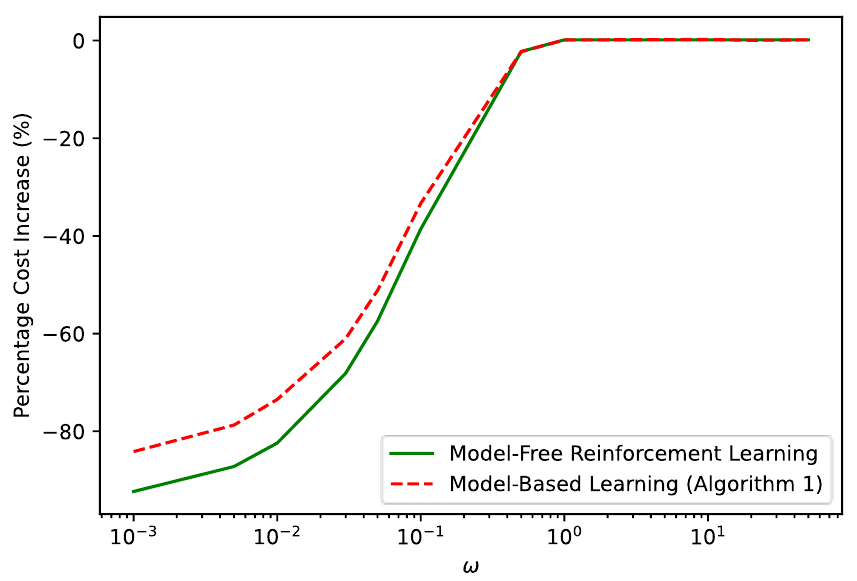}
\caption{Cost Comparison: SwiftCache vs. RL Policy}
\label{fig:omega}
\end{figure}

In Fig. \ref{fig:omega}, we compare the percentage cost increase of model-free reinforcement learning (given by Equation \ref{eq:percentageRL}) with the model-based policy (given by Equation \ref{eq:percentageMB}) against the optimal solution under the Poisson process assumption. The plot shows these increases as functions of the environment parameter $\omega$ over a $10^6$ seconds duration. Surprisingly, model-based learning, designed for Poisson arrivals, performs nearly as well as model-free RL in more general environments, maintaining high gains as the variance of interarrival times decreases. However, when the variance increases, model-free RL outperforms model-based learning. It's crucial to note that the higher gains of model-free RL come with longer training times and sensitivity to environmental changes, requiring retraining and significant computational resources.

\section{Conclusion}
\label{sec:conclusion}
SwiftCache stands out as an innovative model-based learning framework tailored for dynamic content caching in content distribution networks (CDNs). Addressing the challenge of local caches lacking knowledge of item popularity and refresh rates, SwiftCache is an efficient policy in scenarios with Poisson process arrivals. Leveraging a threshold-based optimal policy, SwiftCache achieves near-optimal cost without the need for resource-intensive training. While comparing favorably to a model-free Reinforcement Learning (RL) policy in environments with moderate variance, SwiftCache's key strength lies in its adaptability to environmental changes without retraining. This adaptability, coupled with its efficient performance, positions SwiftCache as a practical and resilient solution for dynamic network environments, offering effective content delivery that aligns with evolving user behaviors.

\appendix
\textbf{Proof of Theorem 1:}
We initiate the proof by establishing that the optimal policy for the Equation \ref{eq:costproblem}, assuming arrivals following Poisson processes, possesses a threshold structure. We then
explicitly characterize these thresholds.

\begin{equation}
\label{eq:costproblem}
\begin{split}
    &\min _{\boldsymbol{u}} \lim _{\mathrm{T} \rightarrow \infty} \frac{1}{T} \mathbb{E}_{s}^{u}\left[\int_{0}^{\mathrm{T}} \sum_{n=1}^N C_{n}^{\pi}\left(s_{n}(t),u_n(t)\right) dt\right]\\
    & \text{s.t.} \quad  u_n(t) \in \{0,1\}, \quad \forall n \in \cN, \, t \geq 0,
\end{split}
\end{equation}

To establish the threshold-based structure of the optimal policy, we initially relax the assumption of limited cache capacity and demonstrate the threshold-based structure under unlimited cache capacity.

When the cache size is unlimited, the average cost for each data item $n \in \cN$ becomes independent of all other items. Consequently, in Equation \ref{eq:costproblem}, we can move the summation to the outside. By utilizing the fetching and freshness costs defined in the paper, we express the optimization problem as:
\begin{equation} \label{eq:pullcontinous}
\begin{split}
    &\hspace{-.14in}  \sum_{n=1}^N \min _{\boldsymbol{u_n}}  \lim _{\mathrm{T} \rightarrow \infty} \frac{1}{T} \int_{0}^{\mathrm{T}}  \left[ u_n(t) b_n c_f \right. \\
    &\left. + r_n(t) s_n(t) \lambda_n c_a (1-u_n(t))\right]dt,
\end{split}
\end{equation}
where $s_n(t)$ denotes the system state and is defined as $s_n(t) = \{t - l : l = \max (t' \leq t : u_n(t') = 1)\} \geq 0$ for all $n \in \cN$. This expression represents the elapsed time since the last update of item $n$ in the cache.

Hence, by decoupling the optimization over items, we can concentrate on optimizing the cost for each data item $n$ separately. Additionally, we define $r_n(t) \in \{0,1\}$ to signify the event of item $n$ being requested at time $t$, i.e., $r_n(t) = 1$ if there is a request at time $t$, and $r_n(t) = 0$ in the case of no request.

In our approach, we adopt the vanishing discounted cost approach to address the average cost problem. Consider the following optimization problem:

\begin{equation} \label{eq:costpullvanish}
\begin{split}
    &\min _{\boldsymbol{u_n}} \lim _{\mathrm{T} \rightarrow \infty}  \mathbb{E}_{x}^{u} \int_{0}^{\mathrm{T}} e^{-\alpha t} C(x_n(t),u_n(t))dt\\
    & \text{s.t.} \quad  u_n(t) \in \{0,1\}, \quad \forall t\geq 0,\\
    & \hspace{.32in}  x_n(t)=(s_n(t),r_n(t)), \quad r_n(t)\in \{0,1\}, \quad s_n(t)\geq 0,\\
    & \hspace{.32in}  s_n(t+dt)=s_n(t)(1-u_n(t))+dt, \quad \forall n \in \cN , \, t\geq 0.
\end{split}
\end{equation}

In the next proposition, we demonstrate that by focusing on the decision only at the jump times of $r$, the average cost cannot increase.

\begin{proposition} \label{prop:pulljump}
Assume $\boldsymbol{u_n}$ is an arbitrary policy. By modifying the policy $\boldsymbol{u_n}$ such that each action $u_n(t) = 1$ will wait until the next request, i.e., $r_n(t) = 1$, we define a new policy $\boldsymbol{u'_n}$ as a modified version of $\boldsymbol{u_n}$. The policy $\boldsymbol{u'_n}$ will not result in a higher average cost than $\boldsymbol{u_n}$ for the optimization problem \eqref{eq:pullcontinous}.
\end{proposition}

\begin{proof} 
Consider the policy $\boldsymbol{u}$, where there exists a time $t$ such that $r(t) = 0$ and $u(t) = 1$. In other words, $\boldsymbol{u}$ updates the cache when there is no request. Let $C^{\boldsymbol{u}}$ be the corresponding cost. Let $t' = \min \left\{\tau > t \,|\, r(\tau) = 1 \right\}$ be the first arriving request after time $t$. We construct a new policy $\boldsymbol{u'}$ by setting $u'(\tau) = 0$ for $t \leq \tau < t'$ and $u'(t') = 1$. Now, we show that this modification does not increase the average cost of Equation \eqref{eq:pullcontinous}.

Since the two policies are identical outside the interval $[t, t')$, we focus on this time period. Policy $\boldsymbol{u}$ updates the cache at time $t$ with a cost of $c_f$. When the next request arrives at $t'$, the cached content might be old, incurring an average freshness cost of $c_a \lambda_n (t' - t)$. Thus, $C^{\boldsymbol{u}}([t, t']) = b_n c_f + c_a \lambda_n (t' - t)$. In contrast, policy $\boldsymbol{u'}$ updates the cache only when there is a request, eliminating potential freshness cost. Therefore, $C^{\boldsymbol{u'}}([t, t']) = c_f$. Hence, by this slight modification to $\boldsymbol{u}$, the cost cannot increase.
\end{proof}

If we consistently apply the policy from Proposition \ref{prop:pulljump} to every update instance of policy $\boldsymbol{u}$—ensuring that cache updates only happen at times of request arrivals—the result is a policy that updates the cache only at instances when there is a request. Such a policy will not perform worse than the original policy $\boldsymbol{u}$. Therefore, in line with Proposition \ref{prop:pulljump} and following the approach in Appendix A, we can view the cost as the cost over $m$ time steps for a discrete-time decision process with a discount factor $q$. Here, the time steps correspond to the jump times of $r_n$, which is a rate $\beta p_n$ Poisson process. Thus, we focus on optimizing the discrete-time vanishing discounted cost problem stated below:

\begin{equation} \label{eq:pulldiscount}
\begin{split}
    &\min _{\boldsymbol{u}} \lim _{m \rightarrow \infty}  \mathbb{E}_{x}^{u} \sum_{k=0}^{m-1} q^k C(x_{t_k}),
\end{split}
\end{equation}

where $0 < q < 1$ is the discount factor, and $x_{t_k}=(s_{t_k},r_{t_k})$ represents the state at time step $k$. Since the cost function is non-negative, following the value iteration algorithm, we define:

\begin{equation*}
    v_{m}(x)=\min _{u_{n}}\left\{c(x, u)+q \sum_{y} v_{m-1}(y) \mathrm{p}(y \mid x, u)\right\}, \quad \forall x, m,
\end{equation*}

where $x=(s,r)$ is the state of the system, and $u \in \{0,1\}$ is the action taken. Additionally, $v_0(x)=0$ for all $x$. Note that $v_m$ is a monotonically non-decreasing sequence.

\begin{lemma}
$v_{m}(s, r)$, for all $m \geq 1$ and $r \in \{0,1\}$, is a non-decreasing function of $s$.
\end{lemma}

\begin{proof}(Lemma 1): We use induction. According to the definition of $v_m(s,r)$, we can show that:

\begin{equation*}
    v_{1}(s, r) = \begin{cases}r b_n c_{f} & s>b_n c_{f} / \lambda_n c_{a}, \\ \lambda_n c_{a} r s & s \leq b_n c_{f} / \lambda_n c_{a},\end{cases}
\end{equation*}

which is a non-decreasing function of $s$. Assume $v_{m-1}(s,r)$ is non-decreasing over $s$, then we have:

\begin{equation*}
\begin{split}
    v_{m}(s, r)=\min & \{\lambda_n c_{a} s r+q\beta p_n v_{m-1}(s+1,1), \\
    & b_n c_{f}+q\beta p_n v_{m-1}(1,1)\}.
\end{split}
\end{equation*}

Therefore, $v_{m}(s, r)$, for all $m \geq 1$ and $r \in \{0,1\}$, being the minimum of two non-decreasing functions of $s$, is itself a non-decreasing function of $s$.\end{proof}

\begin{proposition} \label{prop:pulloptimalr}
For $r=1$, there exists a $\tau_m$ such that the policy that minimizes $v_m(s,1)$ is given by:

\begin{equation} 
\label{eq:u(s,1)}
    u_{m}^{*}(s, 1)= \begin{cases}1 & s>\tau_{m} \\ 0 & s \leq \tau_{m}\end{cases}.
\end{equation}
\end{proposition}

\begin{proof}(Proposition \ref{prop:pulloptimalr}): For $m=1$, we have $\tau_{1}=b_n c_{f} / \lambda_n c_{a}$, and the assumption holds. According to the definition of $v_m(s,r)$ and focusing on the jump times of $r_n$, we have:

\begin{equation*} 
\begin{split}
    \hspace{-.1in} v_{m}(s, 1)=\min  \{& \underbrace{  \lambda_n c_{a} s +q v_{m-1}(s+1,1)}_{u_{m}=0}, \\
    & \underbrace{b_n c_{f}+q v_{m-1}(1,1)}_{u_{n}=1}\},
    \end{split}
\end{equation*}

where the first term is the projected cost when the request is served from the edge-cache (i.e., $u_m=0$), and the second term is the projected cost of fetching the request from the back-end database. Moreover, according to Lemma 1, $v_{m-1}(s,r)$ is a non-decreasing function of $s$. Therefore, if for a given $s$, $u_m^*(s,r)=1$, then for all $s'>s$ we have $u_m^*(s',r)=1$. Besides, if for a given $s$, $u_m^*(s,r)=0$, then for all $s'>s$ we have $u_m^*(s',r)=0$. Thus, there exists a threshold $\tau_m$ where Equation \eqref{eq:u(s,1)} holds.\end{proof}

According to Propositions \ref{prop:pulljump} and \ref{prop:pulloptimalr}, there exists a threshold $\tau_m$ where the optimal action $u_m^*(s,r)$ that minimizes $v_m(x)$, for all $m$, can be expressed as:

\begin{equation} 
\label{eq:pullhypothesis} 
u_{m}^{*}(s, r)= \begin{cases}r & s>\tau_{m} \\ 0 & s \leq \tau_{m}\end{cases}.
\end{equation}

Next, we demonstrate that the sequence $\{v_m(x)\}$ converges to $v(x)$ as $m$ increases. Since $\{v_m\}$ is a monotonically non-decreasing sequence, we establish that there exists a policy such that as $m$ increases, the cost remains finite. Assume that $\tau$ is constant. Consider the policy $\bar{u}_{\tau}=\bar{u}_{2\tau}=\bar{u}_{3\tau}=\ldots=1$, where we only fetch every $\tau$ time slot. Under this policy, we have $(1-q^\tau)v_{\infty}^{\bar{u}}=g(q)<\infty$, where $g(q)=\beta \lambda \sum_{n=1}^{\tau-1}nq^n+q^\tau b_n c_f$ is always finite for a given $\tau$ and $q<1$. Therefore, $v_m(x)\leq v_{m+1}(x)\leq \ldots \leq v_{\infty}^{\bar{u}}$. According to the Monotone Convergence Theorem, there exists $v(x)$ such that $v_{m}(x) \stackrel{m \rightarrow \infty}{\longrightarrow} v(x)$. Thus, the optimal policy that minimizes $v(x)$ is obtained through $\lim_{m\to \infty } u_{m}^{*}(s, r) =u^{*}(s, r)$ and is given by:

\begin{equation*}
u^{*}(s, r)= \begin{cases}r & s>\tau^*\\ 0 & s \leq \tau^* \end{cases},
\end{equation*}

where $\tau^*= \lim_{m\to \infty } \tau_m \leq \frac{b_n c_f}{\lambda c_a}$, since $\tau_1=\frac{b_n c_f}{\lambda c_a}$ for all $q \in (0,1)$ and $0 \leq \tau_{m}\leq \tau _1$ for all $m$. This follows from Lemma 1, which states that for any given $q \in (0,1)$, we have $v_{m-1}(s+1,1)\geq v_{m-1}(1,1)$ for all $m$ and $s\geq 0$. For the average cost setup, we now argue that there exists a solution to the average cost minimization problem, and this solution admits an optimal control of threshold type.

The existence of a solution to the average cost optimality follows from the moment nature of the cost function and the derivation of a finite cost solution using the convex analytic method \cite{Borkar2}. The optimality inequality under a stable control policy is established via \cite[Prop. 5.2]{hernandez1993existence}, and further discussions on both existence and optimality inequality can be found in \cite[Theorem 2.1 and Section 3.2.1]{arapostathis2021optimality}.

Consider the relative value functions solving the discounted cost optimality equation for each $q \in (0,1)$:

\begin{equation}
    \begin{split}
& J_q(x) - J_q(x_0) = \\& \min_u \left(c(x,u) + q \int (J_q(y) - J_q(x_0))p(dy|x,u)\right)      
    \end{split}
\end{equation}
Via the threshold optimality of finite horizon problems, since the threshold policies, $\tau_q$, are uniformly bounded, there exists a converging subsequence such that $\tau_{q_m} \to \tau^*$. Consequently,

\begin{equation}
    \begin{split}
& \lim_{m \to \infty} J_{q_m}(x) - J_{q_m}(x_0) = \\& \lim_{m \to \infty}  \min_u \left(c(x,u) + q_m \int (J_{q_m}(y) - J_{q_m}(x_0))p(dy|x,u)\right) \\& = \lim_{m \to \infty} \left( c(x,u_{\tau_{q_m}}) + q_m \int (J_{q_m}(y) - J_{q_m}(x_0))p(dy|x,u_{\tau_{q_m}}) \right)    
    \end{split}
\end{equation}

By considering a converging sequence and applying \cite[Theorem 5.4.3]{WinNT}, we conclude that the average cost optimality inequality applies under the limit threshold policy. Thus, by relating the Discounted Cost Optimality Equation (DCOE) and Average Cost Optimality Inequality (ACOI), the optimality of threshold policies follows.

Now, using the obtained result, the optimal policy for the average cost problem, with a threshold structure, allows us to determine the optimal threshold and the subsequent optimal cost.

In the following, we characterize the optimal thresholds for the optimal policy under Poisson process arrivals.

Given that the local cache has a constraint on its average cache occupancy, denoted by $\tilde{B}$, our goal is to solve the following average-cache-constrained problem:

\begin{eqnarray}
    &\displaystyle \min_{\boldsymbol{\tau}\in \cT}&  C (\boldsymbol{\tau} )\label{eq:averageconstmin}
 \\
    & s.t.& B (\boldsymbol{\tau} )\leq \tilde{B}, \nonumber 
\end{eqnarray}

where $C(\boldsymbol{\tau})$ and $B(\boldsymbol{\tau})$ are the average cost and average cache occupancy under the threshold-based policy $\boldsymbol{\tau}$ and are given by:

\begin{eqnarray}
    C(\boldsymbol{\tau}) &=& \beta \sum_{n=1}^{N} p_{n} \frac{\frac{1}{2} c_{a} \lambda_{n} p_{n} \beta \tau_{n}^{2} + b_n c_{f}}{1+\beta p_{n} \tau_{n}}, \label{eq:cost} \\
    B(\boldsymbol{\tau}) &=& \sum_{n=1}^{N} \frac{\beta p_n \tau_n}{1+\beta p_n \tau_n}, \label{eq:cacheoc}
\end{eqnarray}

We note that this problem is non-convex since the constraint set $\{\boldsymbol{\tau}: B(\boldsymbol{\tau}) \leq \tilde{B}\}$ is non-convex. However, we take the following approach to solve it. Define the feasible set $\mathcal{F}_{B}$ as:

\begin{equation*}
    \mathcal{F}_{B} = \left\{\boldsymbol{\tau} | \tau_{n} \geq 0, g(\boldsymbol{\tau})=\sum_{n=1}^{N} \frac{\beta p_{n} \tau_{n}}{\beta p_{n} \tau_{n}+1} \leq \tilde{B}\right\},
\end{equation*}

which is a non-convex set. Then the cost optimization problem \eqref{eq:averageconstmin} can be expressed as:

\begin{equation}
\label{eq:costminconstraintfining}
    \min_{\boldsymbol{\tau} \in \mathcal{F}_{B}} C(\boldsymbol{\tau}).
\end{equation}

For any optimization problem $\min _{\boldsymbol{\tau} \in \mathcal{F}} C(\boldsymbol{\tau})$ as presented in \cite{ho2017necessary}, if all the following conditions hold:

1) Slater condition,
   
2) Non-degeneracy assumption for $\forall \boldsymbol{\tau} \in \mathcal{F}$,

3) $\exists \boldsymbol{\tau}^{\prime} \in \mathcal{F}: \forall \boldsymbol{\tau} \in \mathcal{F}, \exists \boldsymbol{t}_{n} \downarrow 0$ with $\boldsymbol{\tau}^{\prime}+\boldsymbol{t}_{n}(\boldsymbol{\tau}-\boldsymbol{\tau}^{\prime}) \in \mathcal{F}$,

4) $L_{C}(\boldsymbol{\tau})=\left\{\boldsymbol{\tau}^{\prime} \in \mathbb{R}^{N}: C\left(\boldsymbol{\tau}^{\prime}\right)<C(\boldsymbol{\tau})\right\}$ is a convex set,

then if $\boldsymbol{\tau}$ is a non-trivial KKT point, it is a global minimizer.

In our case, all four conditions above hold. Therefore, to obtain the global minimizer, we need to write the KKT conditions. The final solution is given by:

\begin{equation}
\label{eq:solutiont}
    \tau_n^* (\beta,\mathbf{p},\boldsymbol{\lambda})=\frac{1}{\beta p_{n}} \left[\sqrt{1+2 \, b_n \, \frac{\beta p_{n} c_{f}-\tilde{\alpha}^*}{c_{a} \lambda_{n}}}-1\right]^+,
\end{equation}

where $\tilde{\alpha}^* \geq 0$ is selected such that the following KKT condition is satisfied:

\begin{equation}
\tilde{\alpha}^*\left(\sum_{n=1}^{N} \frac{\beta p_{n} \tau^*_{n}}{\beta p_{n} \tau^*_{n}+1} \, b_n-\tilde{B}\right)=0.
\end{equation}

This completes the proof of Theorem 1.

\bibliographystyle{IEEEtran}
\bibliography{ref}

\end{document}


\appendix
\textbf{Proof of Theorem 1:}
We initiate the proof by establishing that the optimal policy for the Equation \ref{eq:costproblem}, assuming arrivals following Poisson processes, possesses a threshold structure. We then
explicitly characterize these thresholds.

\begin{equation}
\label{eq:costproblem}
\begin{split}
    &\min _{\boldsymbol{u}} \lim _{\mathrm{T} \rightarrow \infty} \frac{1}{T} \mathbb{E}_{s}^{u}\left[\int_{0}^{\mathrm{T}} \sum_{n=1}^N C_{n}^{\pi}\left(s_{n}(t),u_n(t)\right) dt\right]\\
    & \text{s.t.} \quad  u_n(t) \in \{0,1\}, \quad \forall n \in \cN, \, t \geq 0,
\end{split}
\end{equation}

To establish the threshold-based structure of the optimal policy, we initially relax the assumption of limited cache capacity and demonstrate the threshold-based structure under unlimited cache capacity.

When the cache size is unlimited, the average cost for each data item $n \in \cN$ becomes independent of all other items. Consequently, in Equation \ref{eq:costproblem}, we can move the summation to the outside. By utilizing the fetching and freshness costs defined in the paper, we express the optimization problem as:
\begin{equation} \label{eq:pullcontinous}
\begin{split}
    \hspace{-.14in}  \sum_{n=1}^N \min _{\boldsymbol{u_n}}  \lim _{\mathrm{T} \rightarrow \infty} \frac{1}{T} \int_{0}^{\mathrm{T}}  &\left[u_n(t) b_n c_f \\& + r_n(t) s_n(t) \lambda_n c_a (1-u_n(t))\right]dt,
\end{split}
\end{equation}
where $s_n(t)$ denotes the system state and is defined as $s_n(t) = \{t - l : l = \max (t' \leq t : u_n(t') = 1)\} \geq 0$ for all $n \in \cN$. This expression represents the elapsed time since the last update of item $n$ in the cache.

Hence, by decoupling the optimization over items, we can concentrate on optimizing the cost for each data item $n$ separately. Additionally, we define $r_n(t) \in \{0,1\}$ to signify the event of item $n$ being requested at time $t$, i.e., $r_n(t) = 1$ if there is a request at time $t$, and $r_n(t) = 0$ in the case of no request.

In our approach, we adopt the vanishing discounted cost approach to address the average cost problem. Consider the following optimization problem:

\begin{equation} \label{eq:costpullvanish}
\begin{split}
    &\min _{\boldsymbol{u_n}} \lim _{\mathrm{T} \rightarrow \infty}  \mathbb{E}_{x}^{u} \int_{0}^{\mathrm{T}} e^{-\alpha t} C(x_n(t),u_n(t))dt\\
    & \text{s.t.} \quad  u_n(t) \in \{0,1\}, \quad \forall t\geq 0,\\
    & \hspace{.32in}  x_n(t)=(s_n(t),r_n(t)), \quad r_n(t)\in \{0,1\}, \quad s_n(t)\geq 0,\\
    & \hspace{.32in}  s_n(t+dt)=s_n(t)(1-u_n(t))+dt, \quad \forall n \in \cN , \, t\geq 0.
\end{split}
\end{equation}

In the next proposition, we demonstrate that by focusing on the decision only at the jump times of $r$, the average cost cannot increase.

\begin{proposition} \label{prop:pulljump}
Assume $\boldsymbol{u_n}$ is an arbitrary policy. By modifying the policy $\boldsymbol{u_n}$ such that each action $u_n(t) = 1$ will wait until the next request, i.e., $r_n(t) = 1$, we define a new policy $\boldsymbol{u'_n}$ as a modified version of $\boldsymbol{u_n}$. The policy $\boldsymbol{u'_n}$ will not result in a higher average cost than $\boldsymbol{u_n}$ for the optimization problem \eqref{eq:pullcontinous}.
\end{proposition}

\begin{proof} 
Consider the policy $\boldsymbol{u}$, where there exists a time $t$ such that $r(t) = 0$ and $u(t) = 1$. In other words, $\boldsymbol{u}$ updates the cache when there is no request. Let $C^{\boldsymbol{u}}$ be the corresponding cost. Let $t' = \min \left\{\tau > t \,|\, r(\tau) = 1 \right\}$ be the first arriving request after time $t$. We construct a new policy $\boldsymbol{u'}$ by setting $u'(\tau) = 0$ for $t \leq \tau < t'$ and $u'(t') = 1$. Now, we show that this modification does not increase the average cost of Equation \eqref{eq:pullcontinous}.

Since the two policies are identical outside the interval $[t, t')$, we focus on this time period. Policy $\boldsymbol{u}$ updates the cache at time $t$ with a cost of $c_f$. When the next request arrives at $t'$, the cached content might be old, incurring an average freshness cost of $c_a \lambda_n (t' - t)$. Thus, $C^{\boldsymbol{u}}([t, t']) = b_n c_f + c_a \lambda_n (t' - t)$. In contrast, policy $\boldsymbol{u'}$ updates the cache only when there is a request, eliminating potential freshness cost. Therefore, $C^{\boldsymbol{u'}}([t, t']) = c_f$. Hence, by this slight modification to $\boldsymbol{u}$, the cost cannot increase.
\end{proof}

If we consistently apply the policy from Proposition \ref{prop:pulljump} to every update instance of policy $\boldsymbol{u}$—ensuring that cache updates only happen at times of request arrivals—the result is a policy that updates the cache only at instances when there is a request. Such a policy will not perform worse than the original policy $\boldsymbol{u}$. Therefore, in line with Proposition \ref{prop:pulljump} and following the approach in Appendix A, we can view the cost as the cost over $m$ time steps for a discrete-time decision process with a discount factor $q$. Here, the time steps correspond to the jump times of $r_n$, which is a rate $\beta p_n$ Poisson process. Thus, we focus on optimizing the discrete-time vanishing discounted cost problem stated below:

\begin{equation} \label{eq:pulldiscount}
\begin{split}
    &\min _{\boldsymbol{u}} \lim _{m \rightarrow \infty}  \mathbb{E}_{x}^{u} \sum_{k=0}^{m-1} q^k C(x_{t_k}),
\end{split}
\end{equation}

where $0 < q < 1$ is the discount factor, and $x_{t_k}=(s_{t_k},r_{t_k})$ represents the state at time step $k$. Since the cost function is non-negative, following the value iteration algorithm, we define:

\begin{equation*}
    v_{m}(x)=\min _{u_{n}}\left\{c(x, u)+q \sum_{y} v_{m-1}(y) \mathrm{p}(y \mid x, u)\right\}, \quad \forall x, m,
\end{equation*}

where $x=(s,r)$ is the state of the system, and $u \in \{0,1\}$ is the action taken. Additionally, $v_0(x)=0$ for all $x$. Note that $v_m$ is a monotonically non-decreasing sequence.

\begin{lemma}
$v_{m}(s, r)$, for all $m \geq 1$ and $r \in \{0,1\}$, is a non-decreasing function of $s$.
\end{lemma}

\begin{proof}(Lemma 1): We use induction. According to the definition of $v_m(s,r)$, we can show that:

\begin{equation*}
    v_{1}(s, r) = \begin{cases}r b_n c_{f} & s>b_n c_{f} / \lambda_n c_{a}, \\ \lambda_n c_{a} r s & s \leq b_n c_{f} / \lambda_n c_{a},\end{cases}
\end{equation*}

which is a non-decreasing function of $s$. Assume $v_{m-1}(s,r)$ is non-decreasing over $s$, then we have:

\begin{equation*}
\begin{split}
    v_{m}(s, r)=\min & \{\lambda_n c_{a} s r+q\beta p_n v_{m-1}(s+1,1), \\
    & b_n c_{f}+q\beta p_n v_{m-1}(1,1)\}.
\end{split}
\end{equation*}

Therefore, $v_{m}(s, r)$, for all $m \geq 1$ and $r \in \{0,1\}$, being the minimum of two non-decreasing functions of $s$, is itself a non-decreasing function of $s$.\end{proof}

\begin{proposition} \label{prop:pulloptimalr}
For $r=1$, there exists a $\tau_m$ such that the policy that minimizes $v_m(s,1)$ is given by:

\begin{equation} 
\label{eq:u(s,1)}
    u_{m}^{*}(s, 1)= \begin{cases}1 & s>\tau_{m} \\ 0 & s \leq \tau_{m}\end{cases}.
\end{equation}
\end{proposition}

\begin{proof}(Proposition \ref{prop:pulloptimalr}): For $m=1$, we have $\tau_{1}=b_n c_{f} / \lambda_n c_{a}$, and the assumption holds. According to the definition of $v_m(s,r)$ and focusing on the jump times of $r_n$, we have:

\begin{equation*} 
\begin{split}
    \hspace{-.1in} v_{m}(s, 1)=\min  \{& \underbrace{  \lambda_n c_{a} s +q v_{m-1}(s+1,1)}_{u_{m}=0}, \\
    & \underbrace{b_n c_{f}+q v_{m-1}(1,1)}_{u_{n}=1}\},
    \end{split}
\end{equation*}

where the first term is the projected cost when the request is served from the edge-cache (i.e., $u_m=0$), and the second term is the projected cost of fetching the request from the back-end database. Moreover, according to Lemma 1, $v_{m-1}(s,r)$ is a non-decreasing function of $s$. Therefore, if for a given $s$, $u_m^*(s,r)=1$, then for all $s'>s$ we have $u_m^*(s',r)=1$. Besides, if for a given $s$, $u_m^*(s,r)=0$, then for all $s'>s$ we have $u_m^*(s',r)=0$. Thus, there exists a threshold $\tau_m$ where Equation \eqref{eq:u(s,1)} holds.\end{proof}

According to Propositions \ref{prop:pulljump} and \ref{prop:pulloptimalr}, there exists a threshold $\tau_m$ where the optimal action $u_m^*(s,r)$ that minimizes $v_m(x)$, for all $m$, can be expressed as:

\begin{equation} 
\label{eq:pullhypothesis} 
u_{m}^{*}(s, r)= \begin{cases}r & s>\tau_{m} \\ 0 & s \leq \tau_{m}\end{cases}.
\end{equation}

Next, we demonstrate that the sequence $\{v_m(x)\}$ converges to $v(x)$ as $m$ increases. Since $\{v_m\}$ is a monotonically non-decreasing sequence, we establish that there exists a policy such that as $m$ increases, the cost remains finite. Assume that $\tau$ is constant. Consider the policy $\bar{u}_{\tau}=\bar{u}_{2\tau}=\bar{u}_{3\tau}=\ldots=1$, where we only fetch every $\tau$ time slot. Under this policy, we have $(1-q^\tau)v_{\infty}^{\bar{u}}=g(q)<\infty$, where $g(q)=\beta \lambda \sum_{n=1}^{\tau-1}nq^n+q^\tau b_n c_f$ is always finite for a given $\tau$ and $q<1$. Therefore, $v_m(x)\leq v_{m+1}(x)\leq \ldots \leq v_{\infty}^{\bar{u}}$. According to the Monotone Convergence Theorem, there exists $v(x)$ such that $v_{m}(x) \stackrel{m \rightarrow \infty}{\longrightarrow} v(x)$. Thus, the optimal policy that minimizes $v(x)$ is obtained through $\lim_{m\to \infty } u_{m}^{*}(s, r) =u^{*}(s, r)$ and is given by:

\begin{equation*}
u^{*}(s, r)= \begin{cases}r & s>\tau^*\\ 0 & s \leq \tau^* \end{cases},
\end{equation*}

where $\tau^*= \lim_{m\to \infty } \tau_m \leq \frac{b_n c_f}{\lambda c_a}$, since $\tau_1=\frac{b_n c_f}{\lambda c_a}$ for all $q \in (0,1)$ and $0 \leq \tau_{m}\leq \tau _1$ for all $m$. This follows from Lemma 1, which states that for any given $q \in (0,1)$, we have $v_{m-1}(s+1,1)\geq v_{m-1}(1,1)$ for all $m$ and $s\geq 0$. For the average cost setup, we now argue that there exists a solution to the average cost minimization problem, and this solution admits an optimal control of threshold type.

The existence of a solution to the average cost optimality follows from the moment nature of the cost function and the derivation of a finite cost solution using the convex analytic method \cite{Borkar2}. The optimality inequality under a stable control policy is established via \cite[Prop. 5.2]{hernandez1993existence}, and further discussions on both existence and optimality inequality can be found in \cite[Theorem 2.1 and Section 3.2.1]{arapostathis2021optimality}.

Consider the relative value functions solving the discounted cost optimality equation for each $q \in (0,1)$:

\begin{equation}
    \begin{split}
& J_q(x) - J_q(x_0) = \\& \min_u \left(c(x,u) + q \int (J_q(y) - J_q(x_0))p(dy|x,u)\right)      
    \end{split}
\end{equation}
Via the threshold optimality of finite horizon problems, since the threshold policies, $\tau_q$, are uniformly bounded, there exists a converging subsequence such that $\tau_{q_m} \to \tau^*$. Consequently,

\begin{equation}
    \begin{split}
& \lim_{m \to \infty} J_{q_m}(x) - J_{q_m}(x_0) = \\& \lim_{m \to \infty}  \min_u \left(c(x,u) + q_m \int (J_{q_m}(y) - J_{q_m}(x_0))p(dy|x,u)\right) \\& = \lim_{m \to \infty} \left(c(x,u_{\tau_{q_m}}) + \\& q_m \int (J_{q_m}(y) - J_{q_m}(x_0))p(dy|x,u_{\tau_{q_m}})\right)    
    \end{split}
\end{equation}

By considering a converging sequence and applying \cite[Theorem 5.4.3]{WinNT}, we conclude that the average cost optimality inequality applies under the limit threshold policy. Thus, by relating the Discounted Cost Optimality Equation (DCOE) and Average Cost Optimality Inequality (ACOI), the optimality of threshold policies follows.

Now, using the obtained result, the optimal policy for the average cost problem, with a threshold structure, allows us to determine the optimal threshold and the subsequent optimal cost.

In the following, we characterize the optimal thresholds for the optimal policy under Poisson process arrivals.

Given that the local cache has a constraint on its average cache occupancy, denoted by $\tilde{B}$, our goal is to solve the following average-cache-constrained problem:

\begin{eqnarray}
    &\displaystyle \min_{\boldsymbol{\tau}\in \cT}&  C (\boldsymbol{\tau} )\label{eq:averageconstmin}
 \\
    & s.t.& B (\boldsymbol{\tau} )\leq \tilde{B}, \nonumber 
\end{eqnarray}

where $C(\boldsymbol{\tau})$ and $B(\boldsymbol{\tau})$ are the average cost and average cache occupancy under the threshold-based policy $\boldsymbol{\tau}$ and are given by:

\begin{eqnarray}
    C(\boldsymbol{\tau}) &=& \beta \sum_{n=1}^{N} p_{n} \frac{\frac{1}{2} c_{a} \lambda_{n} p_{n} \beta \tau_{n}^{2} + b_n c_{f}}{1+\beta p_{n} \tau_{n}}, \label{eq:cost} \\
    B(\boldsymbol{\tau}) &=& \sum_{n=1}^{N} \frac{\beta p_n \tau_n}{1+\beta p_n \tau_n}, \label{eq:cacheoc}
\end{eqnarray}

We note that this problem is non-convex since the constraint set $\{\boldsymbol{\tau}: B(\boldsymbol{\tau}) \leq \tilde{B}\}$ is non-convex. However, we take the following approach to solve it. Define the feasible set $\mathcal{F}_{B}$ as:

\begin{equation*}
    \mathcal{F}_{B} = \left\{\boldsymbol{\tau} | \tau_{n} \geq 0, g(\boldsymbol{\tau})=\sum_{n=1}^{N} \frac{\beta p_{n} \tau_{n}}{\beta p_{n} \tau_{n}+1} \leq \tilde{B}\right\},
\end{equation*}

which is a non-convex set. Then the cost optimization problem \eqref{eq:averageconstmin} can be expressed as:

\begin{equation}
\label{eq:costminconstraintfining}
    \min_{\boldsymbol{\tau} \in \mathcal{F}_{B}} C(\boldsymbol{\tau}).
\end{equation}

For any optimization problem $\min _{\boldsymbol{\tau} \in \mathcal{F}} C(\boldsymbol{\tau})$ as presented in \cite{ho2017necessary}, if all the following conditions hold:

1) Slater condition,
   
2) Non-degeneracy assumption for $\forall \boldsymbol{\tau} \in \mathcal{F}$,

3) $\exists \boldsymbol{\tau}^{\prime} \in \mathcal{F}: \forall \boldsymbol{\tau} \in \mathcal{F}, \exists \boldsymbol{t}_{n} \downarrow 0$ with $\boldsymbol{\tau}^{\prime}+\boldsymbol{t}_{n}(\boldsymbol{\tau}-\boldsymbol{\tau}^{\prime}) \in \mathcal{F}$,

4) $L_{C}(\boldsymbol{\tau})=\left\{\boldsymbol{\tau}^{\prime} \in \mathbb{R}^{N}: C\left(\boldsymbol{\tau}^{\prime}\right)<C(\boldsymbol{\tau})\right\}$ is a convex set,

then if $\boldsymbol{\tau}$ is a non-trivial KKT point, it is a global minimizer.

In our case, all four conditions above hold. Therefore, to obtain the global minimizer, we need to write the KKT conditions. The final solution is given by:

\begin{equation}
\label{eq:solutiont}
    \tau_n^* (\beta,\mathbf{p},\boldsymbol{\lambda})=\frac{1}{\beta p_{n}} \left[\sqrt{1+2 \, b_n \, \frac{\beta p_{n} c_{f}-\tilde{\alpha}^*}{c_{a} \lambda_{n}}}-1\right]^+,
\end{equation}

where $\tilde{\alpha}^* \geq 0$ is selected such that the following KKT condition is satisfied:

\begin{equation}
\tilde{\alpha}^*\left(\sum_{n=1}^{N} \frac{\beta p_{n} \tau^*_{n}}{\beta p_{n} \tau^*_{n}+1} \, b_n-\tilde{B}\right)=0.
\end{equation}

This completes the proof of Theorem 1.

\bibliographystyle{IEEEtran}
\bibliography{ref}